\documentclass[11pt]{amsart}
\usepackage{amsmath,amsfonts,amsthm,amssymb,bbm,tikz}
\usetikzlibrary{decorations.pathmorphing}
\usepackage[margin=1.5in]{geometry}

\newcommand{\reals}{\mathbb{R}}

\newcommand{\Var}[1]{\mathbf{Var}(#1)}

\newcommand{\Dform}[2]{\mathcal{Q}\left(#1,#2 \right)}
\newcommand{\tripNorm}[1]{|||#1|||_\infty}

\newtheorem{thm}{Theorem}[section]

\newtheorem{lem}{Lemma}[section]

\theoremstyle{definition}

\begin{document}
\title{A method to derive concentration of measure bounds on Markov chains}
\author[S. Ng]{Stephen Ng}
\address{Department of Mathematics\\
University of Rochester\\
Rochester, NY 14627, USA}
\email{ng@math.rochester.edu}

\author[M. Walters]{Meg Walters}
\address{Department of Mathematics\\
University of Rochester\\
Rochester, NY 14627, USA}
\email{walters@math.rochester.edu}

\date{\today}

\begin{abstract}
We explore a method introduced by Chatterjee and Ledoux in a paper on eigenvalues of principle submatrices.  The method provides a tool to prove concentration of measure in cases where there is a Markov chain meeting certain conditions, and where the spectral gap of the chain is known.  We provide several additional applications of this method.  These applications  include results on operator compressions using the Kac walk on $SO(n)$ and a Kac walk coupled to a thermostat, and a concentration of measure result for the length of the longest increasing subsequence of a random walk distributed under the invariant measure for the asymmetric exclusion process.   
\end{abstract}
\maketitle
\section{Introduction}
In the analysis of Chatterjee and Ledoux on concentration of measure for random submatrices \cite{ChatLed} , it is proved that for an arbitrary Hermitian matrix of order $n$ and $k\leq n$ sufficiently large, the distribution of eigenvalues is almost the same for any principal submatrix of order $k$.  Their proof uses the random transposition walk on $S_n$ and concentration of measure techniques.  To further generalize their results, we observe that it is important to use a Markov chain which does not change too many matrix entries all at once and whose spectral gap is known. To demonstrate that this method can be generalized to a much wider range of problems, we provide three applications.  As our first application, instead of looking at a Markov chain on $S_n$, we first consider a Markov chain on $SO(n)$.  We introduce the Kac walk on $SO(n)$ and demonstrate that it is sufficiently similar to the transposition Markov chain to allow for Chatterjee and Ledoux's results to carry over to the more general case of operator compressions.  It should be noted that a similar result has been proved by Meckes and Meckes \cite {MeckesMeckes} using different techniques.  In a more recent work \cite{MeckesMeckes2}, Meckes and Meckes have extended their techniques to include several other classes of random matrices and prove almost sure convergence of the empirical spectral measure.   As the purpose of this paper is to highlight the fact that the methods of Chatterjee and Ledoux can be extended to include more general cases, we include this operator compression result, as it a straightforward application and serves as a useful example for us to explain the method in detail.  As a second application, we apply the method to get a concentration of measure result for a compression by a matrix of Gaussians using the Kac walk coupled to a thermostat. We also show that the method can be applied to get concentration of measure of the length of the longest increasing subsequence of a random walk evolving under the asymmetric exclusion process.  This method opens the door to concentration of measure in settings where one has an appropriate underlying Markov process with a known spectral gap.  

\section{Overview of method}
Before diving into the applications, we would like to give a brief overview of the method.  We will then show how to calculate concentration of measure in our applications using this technique.  To use the method, we must start with a stationary, reversible Markov chain for which the spectral gap is known.  We denote the Markov chain by $X_0, X_1, \dots$.  Call the state space of the Markov chain $\mathcal{S}$.  We will denote the invariant distribution as $\pi$ and the spectral gap as $\lambda_1$.  For a function $f:\mathcal{S}\rightarrow\mathbb{R}$, define 
$$
\||f\||_{\infty}^2 :=\frac{1}{2}\sup_{x\in S}\mathbb{E}((g(X_1)-g(X_0))^2 | X_0=x)
$$
and 
$$
\mathcal{Q}(f,f):=\frac{1}{2}\mathbb{E}((f(X_1)-f(X_0))^2)
$$
The Poincare inequality tells us that 
$$
\mathcal{Q}(f,f) \geq \lambda_1 \mathrm{Var}(f(X_0))
$$
In order for the method to work properly, $\||f\||^2_{\infty}$ must be bounded.  An important step in all of our applications will be finding a bound for $f$, so for now, assume that $\||f\||^2_{\infty}<\delta$.  

We begin by applying the Poincare inequality to $e^{tf(X_0)}$ for $t\geq 0$.  This gives 
$$
\lambda_1\mathrm{Var}(e^{tf(X_0)}) \leq \mathcal{Q}(e^{tf(X_0)}, e^{tf(X_0)})
$$
$$
=\frac{1}{2}\mathbb{E}(e^{tf(X_1)}-e^{tf(X_0)})^2
$$
$$
 \mathbb{E}\big(\mathbbm{1}_{f(X_0)\geq f(X_1)}(e^{tf(X_1)}-e^{tf(X_0)})^2\big)
$$
$$
=\mathbb{E}(\mathbb{E}(\mathbbm{1}_{f(X_0)\geq f(X_1)}(e^{t(f(X_1)-f(X_0))}-1)^2|X_0)e^{2tf(X_0)})
$$
$$
\leq t^2\mathbb{E}(\mathbb{E}(\mathbbm{1}_{f(X_0)\geq f(X_1)}(f(X_0)-f(X_1))^2|X_0)e^{2tf(X_0)})
$$
$$
\leq t^2\||f\||^2_{\infty}\mathbb{E}(e^{2tf(X_0)})
$$
We then define $\Lambda(t):=e^{-t\mathbb{E}f(X_0)}\mathbb{E}(e^{tf(X_0)})$ and use recursion to show that $\Lambda(c\sqrt{\lambda_1/\delta})\leq C<\infty$ for explicit values of $c$ and $C$.  Chebyshev's inequality then leads to 
$$
\mathbb{P}(f(X_0)\geq\mathbb{E}(f(X))+r)\leq Ce^{-cr\sqrt{\lambda_1/\delta}}
$$
for $r>0$.  Once we have this, the method can be applied after choosing and appropriate Markov chain and finding $\lambda_1$ and $\delta$.  Further details will be provided in the applications.  

\section{The Kac walk on $SO(n)$}
The following model, introduced by Kac \cite{Kac}, describes a system of particles evolving under a random collision mechanism such that the total energy of the system is conserved.  Given a system of $n$ particles in one dimension, the state of the system is specified by $\vec{v}=(v_1,\dots v_n)$, the velocities of the particles.  At a time step $t$, $i$ and $j$ are chosen uniformly at random from $\{1,\dots, n\}$ and $\theta$ is chosen uniformly at random on $(-\pi, \pi]$.  The $i$ and $j$ correspond to a collision between particles $i$ and $j$ such that the energy,
$$
E=\sum_{k=1}^n v_k^2
$$
is conserved.  Under this constraint, after a collision, the new velocities will be of the form $v_i^{\mathrm{new}}=v_i\cos(\theta)+v_j\sin(\theta)$ and $v_j^{\mathrm{new}}=v_j\cos(\theta)-v_i\sin(\theta)$.    
For $i<j$, let $R_{ij}(\theta)$ be the rotation matrix given by:
\begin{equation*}
	R_{ij} (\theta) = \begin{pmatrix} I & & & & \\
		&\cos(\theta) &&\sin(\theta)& \\
		&&I&& \\
		&-\sin(\theta) &&\cos(\theta)& \\
		&&&& I \end{pmatrix}
\end{equation*}
where the $\cos(\theta)$ and $\sin(\theta)$ terms are in rows and columns labeled $i$ and $j$, and the $I$ denote identity matrices of different sizes (possibly 0). We will use the convention that $R_{ii}{\theta}=I$.  After one step of the process, $\vec{v}_{new}=R_{ij}(\theta)\vec{v}$.  

In our case, we will be considering this process acting on $SO(n)$, so instead of vectors in $\mathbb{R}^n$, our states will be given by matrices $G\in SO(n)$.  Then we can define the one-step Markov transition operator for the Kac walk, $Q$, on continuous functions of $SO(n)$:
\begin{equation} \label{Kac}
	Qf(G) = \frac{1}{\binom{n}{2}} \sum_{i<j} \int_0^{2\pi} f(R_{ij}(\theta)G)\frac{1}{2\pi} d\theta
\end{equation}
for any $G\in SO(n)$, and where $f$ is a continuous function on $SO(n)$. 

\begin{thm}[\cite{CarlenCarvalhoLoss,Maslen}]
	The Kac walk on $SO(n)$ is ergodic and its invariant distribution is the uniform distribution on $SO(n)$. Furthermore, the spectral gap of the Kac walk on $SO(n)$ is $\frac{n+2}{2(n-1)n}$.
\end{thm}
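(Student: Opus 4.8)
\smallskip
\noindent\emph{Proof strategy.}\ I would treat the three assertions in order of increasing difficulty. Stationarity and reversibility of the uniform measure are formal, ergodicity comes down to the fact that Givens rotations generate $SO(n)$, and the exact value of the gap is the only substantive point, which I would extract from Fourier analysis on $SO(n)$ together with the spectral computations of Maslen and of Carlen--Carvalho--Loss.

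First I would record that $Q$ is left convolution by the probability measure $\mu$ on $SO(n)$ that picks $i<j$ and $\theta$ uniformly and outputs $R_{ij}(\theta)$, and that $\mu$ is symmetric since $R_{ij}(\theta)^{-1}=R_{ij}(-\theta)$ and the law of $\theta$ is even. Because each $R_{ij}(\theta)$ lies in $SO(n)$ and Haar measure is left--invariant, $\int Qf\,dG=\int f\,dG$, so the uniform measure is stationary; symmetry of $\mu$ makes $Q$ self--adjoint on $L^2(SO(n))$, i.e.\ the chain is reversible. For the remaining claims I would pass to the Peter--Weyl decomposition: since $Q$ commutes with right translations, on the $\pi$--isotypic block $V_\pi\otimes V_\pi^{\ast}$ (with $\pi$ running over the irreducible unitary representations of $SO(n)$) it acts as $\widehat\mu(\pi)\otimes I$, where
\[
\widehat\mu(\pi)=\int_{SO(n)}\pi(g)\,d\mu(g)=\binom n2^{-1}\sum_{i<j}P^{\pi}_{ij},\qquad P^{\pi}_{ij}:=\frac1{2\pi}\int_0^{2\pi}\pi\!\big(R_{ij}(\theta)\big)\,d\theta ,
\]
and $P^{\pi}_{ij}$ is the orthogonal projection of $V_\pi$ onto the subspace fixed by the circle $\{R_{ij}(\theta):\theta\in[0,2\pi)\}$. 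Thus the spectrum of $Q$ is the union over $\pi$ of the spectra of the $\widehat\mu(\pi)$, each of which, being a convex combination of orthogonal projections, satisfies $0\preceq\widehat\mu(\pi)\preceq I$. Moreover $\widehat\mu(\pi)v=v$ forces $P^{\pi}_{ij}v=v$ for every pair, so $v$ is fixed by the subgroup generated by all the $R_{ij}(\theta)$; as every rotation is a product of Givens rotations, $v$ is $SO(n)$--invariant and hence $v=0$ unless $\pi$ is trivial. So $1$ is a simple eigenvalue of $Q$, giving ergodicity with uniform invariant distribution.

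It then remains to evaluate $\lambda_1=1-\sup\{\,\lambda_{\max}(\widehat\mu(\pi)):\pi\text{ nontrivial}\,\}$. For upper bounds on $\lambda_1$ I would use two facts. Writing $L_{ij}$ for the generator of $R_{ij}(\theta)$, the circle--average $P^{\pi}_{ij}$ is the spectral projection onto $\ker\pi(L_{ij})$, so $I-P^{\pi}_{ij}\preceq-\pi(L_{ij})^2$, with equality only where every ``angular momentum'' $\pm i\pi(L_{ij})$ lies in $\{0,\pm1\}$; summing over pairs yields $I-\widehat\mu(\pi)\preceq\binom n2^{-1}C_\pi I$, where $C_\pi\geq0$ is the scalar by which $-\sum_{i<j}L_{ij}^2$ (the Casimir of $\mathfrak{so}(n)$) acts on $V_\pi$, and taking $\pi$ the standard representation ($C_\pi=n-1$, angular momenta $0,\pm1$, bound sharp) already gives the crude estimate $\lambda_1\leq\frac2n$. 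Second, under $G\mapsto Ge_n$ the Kac walk on $SO(n)$ projects to the Kac walk on $S^{n-1}$, so $L^2(S^{n-1})$ sits inside $L^2(SO(n))$ as a $Q$--invariant subspace and $\lambda_1$ is at most the spectral gap of the sphere walk; by Carlen--Carvalho--Loss the latter equals $\frac{n+2}{2(n-1)n}$ (in the normalization used here), the extremal eigenfunction living in the degree--four spherical harmonics --- equivalently, a direct computation with the degree--four zonal harmonic about a coordinate axis gives $\lambda_{\max}(\widehat\mu(\mathcal H_4))=1-\frac{n+2}{2(n-1)n}$ --- so $\lambda_1\leq\frac{n+2}{2(n-1)n}$. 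The reverse inequality $\lambda_1\geq\frac{n+2}{2(n-1)n}$, i.e.\ $\lambda_{\max}(\widehat\mu(\pi))\leq1-\frac{n+2}{2(n-1)n}$ for \emph{every} nontrivial $\pi$, is the heart of the matter, and the reason I would rely on \cite{Maslen,CarlenCarvalhoLoss} rather than argue \emph{ab initio}: the $P^{\pi}_{ij}$ do not commute, so there is no simultaneous diagonalization, and at the extremal representation the angular momenta reach $4$, so the soft Casimir bound is hopelessly lossy there; one is forced into a genuinely global spectral computation --- Maslen's determination of the whole spectrum of the Kac operator, together with the variational argument of Carlen--Carvalho--Loss isolating the degree--four mode and dominating the non--spherical irreducibles of $SO(n)$. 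Combined with the upper bound, this yields $\lambda_1=\frac{n+2}{2(n-1)n}$, and the main obstacle throughout is precisely this sharp lower bound on the gap.
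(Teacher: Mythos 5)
The paper offers no proof of this statement at all: it is imported verbatim from Carlen--Carvalho--Loss and Maslen, so there is no internal argument to compare against. Your outline is sound and, if anything, supplies more structure than the paper does: stationarity and reversibility via the symmetric convolution measure are correct, the Peter--Weyl reduction to the matrices $\widehat\mu(\pi)$ (convex combinations of projections onto the fixed spaces of the circles $\{R_{ij}(\theta)\}$) correctly yields simplicity of the eigenvalue $1$ because Givens rotations generate $SO(n)$, and the embedding $L^2(S^{n-1})\hookrightarrow L^2(SO(n))$ via $G\mapsto Ge_n$ correctly shows the $SO(n)$ gap is at most the sphere-walk gap $\frac{n+2}{2(n-1)n}$. (A small remark: for the standard representation $\widehat\mu(\pi)$ is exactly the scalar $1-\frac2n$, so the crude bound $\lambda_1\le \frac2n$ needs no Casimir inequality.) You correctly identify that the only substantive content is the matching lower bound over \emph{all} nontrivial irreducibles, and you defer it to the cited spectral computations --- which is precisely what the paper itself does, so there is no gap to flag beyond noting that your argument is a reduction to those references rather than a self-contained proof.
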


Recall that for any reversible Markov chain, we can define the Dirichlet form, $\Dform{\cdot}{\cdot}$. It is well known that for a Markov chain with spectral gap, $\lambda_1$, the Poincare inequality holds:
\begin{equation*}
	\lambda_1 \Var{f} \leq \Dform{f}{f}.
\end{equation*}
For the Kac walk, we have 
\begin{equation*}
	\Dform{f}{f} = \frac 1 {2\binom{n}{2}} \sum_{1 \leq i<j\leq n} \int_0^{2\pi} \frac{1}{2\pi} \int_{SO(n)} \left(f(G) - f(R_{ij}(\theta)G)\right)^2 d\mu_n(G) d\theta,
\end{equation*}
where $\mu_n$ is the Haar measure on $SO(n)$ normalized so that the total measure is $1$.

Let us define the triple norm:
\begin{equation}
	\tripNorm{f}^2 = \frac 1 {2\binom{n}{2}} \sup_{G\in SO(n)}\sum_{1 \leq i<j\leq n} \int_0^{2\pi} \frac{1}{2\pi} \left|f(G) - f(R_{ij}(\theta)G)\right|^2 d\theta.
\end{equation}
The following result is analogous to Theorem 3.3 from Ledoux's Concentration of Measure Phenomenon book \cite{LedouxCOM} . We reproduce the proof of Theorem 3.3 here to verify that even though our situation does not satisfy the conditions of the theorem, the exact same argument carries through for the Kac walk on $SO(n)$. 
\begin{thm}
	Consider the Kac walk on $SO(n)$ and let $F:SO(n)\to \reals$ be given such that $\tripNorm{F}\leq 1$. Then $F$ is integrable with respect to $\mu_n$ and for every $r\geq 0$, 
	\begin{equation*}
		\mu_n(F \geq \int F d\mu_n + r) \leq 3 e^{-r\sqrt{\lambda_1}/2}
	\end{equation*}
	where $\lambda_1= \frac{n+2}{2(n-1)n}$ is the spectral gap of the Kac walk on $SO(n)$.
\end{thm}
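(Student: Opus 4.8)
The plan is to reproduce, in the continuous setting of the Kac walk, the recursion described in Section~2 (the argument behind Ledoux's Theorem~3.3), observing that every step uses only reversibility of the walk, the Poincar\'e inequality with the explicit gap $\lambda_1=\frac{n+2}{2(n-1)n}$ of Theorem~3.1, and the relation between $\tripNorm{\cdot}$ and the Dirichlet form $\Dform{\cdot}{\cdot}$ set out in Section~2. I would begin with two reductions. First, since both $\tripNorm{\cdot}$ and $\Dform{\cdot}{\cdot}$ depend on $F$ only through its increments along the rotations $R_{ij}(\theta)$, I may subtract a constant and assume $\int F\,d\mu_n=0$; it then suffices to bound $\Lambda(u):=\int_{SO(n)}e^{uF}\,d\mu_n$ at a suitable $u>0$. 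Second, since $F$ is a priori only measurable, I would prove everything first under the assumption that $F$ is bounded and recover the general statement afterward by truncation.

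For bounded, centered $F$, apply the Poincar\'e inequality to $e^{tF}$ for $t\ge 0$ and then run the chain of inequalities of Section~2: using reversibility to pass to the restriction of $(F(G)-F(R_{ij}(\theta)G))^2$ to the event $\{F(G)\ge F(R_{ij}(\theta)G)\}$, factoring out $e^{2tF(G)}$, applying the elementary bound $(e^{-u}-1)^2\le u^2$ for $u\ge 0$, and using $\tripNorm{F}\le 1$, one obtains
\[
\lambda_1\,\Var{e^{tF}} \;\le\; \Dform{e^{tF}}{e^{tF}} \;\le\; t^2\int_{SO(n)}e^{2tF}\,d\mu_n .
\]
Since $\Var{e^{tF}}=\Lambda(2t)-\Lambda(t)^2$, rearranging gives $\Lambda(2t)(\lambda_1-t^2)\le\lambda_1\Lambda(t)^2$, i.e.
\[
\Lambda(u) \;\le\; \frac{1}{1-u^2/(4\lambda_1)}\;\Lambda(u/2)^2 \qquad\text{for } 0\le u<2\sqrt{\lambda_1}.
\]
The point to emphasize here --- and the reason the proof goes through even though the hypotheses of Ledoux's Theorem~3.3 (a discrete state space, steps along edges of a graph) are not literally met --- is that this derivation invokes nothing beyond reversibility, the spectral gap, and the definition of the triple norm, with sums over edges simply replaced by the integral defining $\Dform{\cdot}{\cdot}$ for the Kac walk.

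Iterating the displayed inequality $m$ times yields
\[
\Lambda(u) \;\le\; \left(\prod_{k=1}^{m}\frac{1}{(1-u^2/(4^{k}\lambda_1))^{2^{k-1}}}\right)\Lambda(u/2^{m})^{2^{m}} .
\]
As $m\to\infty$ the tail factor $\Lambda(u/2^m)^{2^m}$ tends to $1$, because $\Lambda$ is smooth at $0$ with $\Lambda(0)=1$ and $\Lambda'(0)=\int F\,d\mu_n=0$, and the infinite product converges for every $u<2\sqrt{\lambda_1}$ since the quantities $u^2/(4^k\lambda_1)$ are summable and stay bounded below $1$. Choosing $u=\tfrac12\sqrt{\lambda_1}$, so that $u^2/(4^k\lambda_1)=4^{-(k+1)}$, a short estimate of $-\sum_{k\ge1}2^{k-1}\log(1-4^{-(k+1)})$ shows the product is at most $3$ (in fact far below). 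Hence $\Lambda(\tfrac12\sqrt{\lambda_1})\le 3$, and Chebyshev's inequality gives, for every $r\ge 0$,
\[
\mu_n\Big(F\ge\int F\,d\mu_n+r\Big) \;=\; \mu_n\big(e^{(\sqrt{\lambda_1}/2)F}\ge e^{(\sqrt{\lambda_1}/2)r}\big) \;\le\; e^{-r\sqrt{\lambda_1}/2}\,\Lambda(\tfrac12\sqrt{\lambda_1}) \;\le\; 3\,e^{-r\sqrt{\lambda_1}/2},
\]
which is the asserted bound for bounded $F$.

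To finish, I would remove the boundedness assumption and, simultaneously, establish integrability. Applying the bounded case to the truncations $F_M=\max(\min(F,M),-M)$ --- which satisfy $\tripNorm{F_M}\le\tripNorm{F}\le 1$ because truncation is $1$-Lipschitz --- gives uniform (in $M$) two-sided sub-exponential tails for $F_M$ about $\int F_M\,d\mu_n$; applying this in turn to $F_M^+=\min(F^+,M)$, together with a comparison argument (passing to $(F-c)^+$ for a large constant $c$ in the degenerate case $\mu_n(F\le 0)=0$), forces $\int F^+\,d\mu_n<\infty$, and symmetrically $\int F^-\,d\mu_n<\infty$, so $F$ is integrable; then $\int F_M\,d\mu_n\to\int F\,d\mu_n$, and a Fatou argument upgrades the tail bound for $F_M$ to the one claimed for $F$. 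The step I expect to be the real obstacle is not any computation but precisely this last part --- the non-circular derivation of integrability of $F$ from $\tripNorm{F}\le 1$ alone, together with the bookkeeping confirming that Ledoux's argument transfers intact to the Kac walk; the recursion, its convergence, and the constant-tracking are then routine.
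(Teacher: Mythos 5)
Your proposal is correct and follows essentially the same route as the paper: apply the Poincar\'e inequality to the exponential of $F$, use the symmetrization and the elementary bound $(e^{-u}-1)^2\le u^2$ to control the Dirichlet form by $\tripNorm{F}^2$ times the Laplace transform, iterate the resulting recursion for $\Lambda$, bound the infinite product by $3$, and finish with Chebyshev. The only departures are refinements rather than a different argument --- you choose the exponent $u=\sqrt{\lambda_1}/2$ (which directly yields the stated bound, whereas the paper plugs in $\lambda=\sqrt{\lambda_1}$) and you add a truncation/limiting step to justify integrability of unbounded $F$, a point the paper, following Ledoux, passes over silently.
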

\begin{proof}
	We first demonstrate that $\Dform{e^{\lambda F/2}}{e^{\lambda F/2}})\leq \frac{\lambda^2 \tripNorm{F}^2}{4} \int_{SO(n)} e^{\lambda F(G)} d\mu_n(G)$ by using symmetry.
	\begin{align*}
		\Dform{e^{\lambda F/2}}{e^{\lambda F/2}} &= \frac 1 {2\binom{n}{2}} \sum_{1 \leq i<j\leq n} \int_0^{2\pi} \frac{1}{2\pi} \int_{SO(n)} \left(e^{\lambda F(G)/2} - e^{\lambda F(R_{ij}(\theta)G)/2}\right)^2 d\mu_n(G) d\theta \\
		& =  \frac 1 {\binom{n}{2}} \sum_{1 \leq i<j\leq n} \int_0^{2\pi} \frac{1}{2\pi} \int_{F(G)>F(R_{ij}(\theta)G)} \left(e^{\lambda F(G)/2} - e^{\lambda F(R_{ij}(\theta)G)/2}\right)^2 d\mu_n(G) d\theta \\
		& \leq \frac{\lambda^2}{4} \frac 1 {\binom{n}{2}} \sum_{1 \leq i<j\leq n} \int_0^{2\pi} \frac{1}{2\pi} \int_{SO(n)} \left(F(G) - F(R_{ij}(\theta)G)\right)^2 e^{\lambda F(G)} d\mu_n(G) d\theta \\
		& = \frac{\lambda^2}{4} \tripNorm{F}^2 \int_{SO(n)} e^{\lambda F(G)} d\mu_n(G)
	\end{align*}
	Setting $\Lambda(\lambda) =e^{-\lambda \int_{SO(n)}F(G)d\mu_n(G) }\int_{SO(n)} e^{\lambda F(G)} d\mu_n(G)$, we combine this with the Poincare inequality to obtain
	\begin{equation*}
		\lambda_1 \Var{e^{\lambda F/2}} = \lambda_1 \left(\Lambda(\lambda)-\Lambda^2\left(\frac{\lambda}2\right)\right) \leq \Dform{e^{\lambda F/2}}{e^{\lambda F/2}} \leq \frac{\lambda^2}{4} \tripNorm{F}^2 \Lambda(\lambda).
	\end{equation*}
	Incorporating the assumption $\tripNorm{F}\leq1$ yields
	\begin{equation*}
		\Lambda(\lambda) \leq \frac{1}{1-\frac{\lambda^2}{4\lambda_1}} \Lambda^2(\lambda/2).
	\end{equation*}
	Iterating the inequality $n$ times gives
	\begin{equation*}
		\Lambda(\lambda) \leq \prod_{k=0}^{n-1} \left(\frac{1}{1-\frac{\lambda^2}{4^{k+1}\lambda_1}}\right)^{2^k} \Lambda^{2^n}(\lambda/2^n).
	\end{equation*}
	Since $\Lambda(\lambda) = 1+ o(\lambda)$, we see that $\Lambda^{2^n}(\lambda/2^n) \to 1$ as $n\to \infty$. This gives the upper bound
	\begin{equation*}
		\Lambda(\lambda)\leq \prod_{k=0}^\infty \left(\frac{1}{1-\frac{\lambda^2}{4^{k+1}\lambda_1}}\right)^{2^k}. 
	\end{equation*}
	By plugging in $\lambda=\sqrt{\lambda_1}$,using the crude estimate $\prod_{k=0}^\infty \left(\frac{1}{1-\frac{1}{4^{k+1}}}\right)^{2^k} < 3$, and applying Chebyshev's inequality, we obtain the result. 
\end{proof}

\section{First Application: Random Operator Compressions}
 Following the notation of Chatterjee and Ledoux, for a given Hermitian matrix $A$ of order $n$ with eigenvalues given by $\lambda_1,\dots,\lambda_n$, we let $F_A$ denote the empirical distribution function of $A$.  This is defined as
$$
F_A(x):=\frac{\#\{i:\lambda_i\leq x\}}{n}
$$
Using the results from above, along with the method of Chatterjee and Ledoux, we are able to prove the following result:
\begin{thm}  Take any $1\leq k\leq n$ and an $n$-dimensional Hermitian matrix $G$.  Let $A$ be the $k\times k$ matrix consisting of the first $k$ rows and $k$ columns of the matrix obtained by conjugating $G$ by a rotation matrix $R^{\theta}_{ij}\in SO(n)$ chosen uniformly at random.  If we let $F$ be the expected spectral distribution of $A$, then for each $r>0$, 
\begin{equation*}
\mathbb{P}(\|F_A-F\|_{\infty}\geq \frac{1}{\sqrt{k}}+r)\leq 12\sqrt{k}\exp\left(-r\sqrt{\frac{k}{32}}\right)
\end{equation*}
\end{thm}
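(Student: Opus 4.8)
The plan is to reduce the Kolmogorov-distance bound to a one-point concentration estimate, to derive that estimate from Theorem 3.2 for the Kac walk on $SO(n)$ after controlling the appropriate triple norm, and then to return to the uniform statement via a quantile net in the style of Chatterjee and Ledoux. Throughout I take $U\in SO(n)$ distributed according to $\mu_n$, the stationary law of the Kac walk, I write $A(U)$ for the top-left $k\times k$ block of $UGU^{T}$ (so the random matrix $A$ of the statement is $A(U)$), and for a fixed real $x$ I set $f_x(U)=F_{A(U)}(x)$ and $g_x(U)=\frac1k\#\{\,i:\lambda_i(A(U))<x\,\}$. Then $\mathbb{E}f_x=F(x)$ and, by monotone convergence, $\mathbb{E}g_x=F(x^-)$, and $F$ is an honest distribution function.

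The key estimate, and the step I expect to be the main obstacle, is a bound on $\tripNorm{f_x}$ and $\tripNorm{g_x}$. Everything hinges on how $A$ changes when $U$ is replaced by $R_{ij}(\theta)U$: conjugating $M=UGU^{T}$ by $R_{ij}(\theta)$ changes only rows and columns $i$ and $j$ of $M$. Thus if $i,j>k$ the block $A$ is untouched; if $i,j\le k$ then $R_{ij}(\theta)$ acts on $\reals^{n}$ as a rotation $\rho$ of the span of $e_1,\dots,e_k$ composed with the identity on the complement, so $A(R_{ij}(\theta)U)=\rho A(U)\rho^{T}$ has the same spectrum as $A(U)$ and $f_x,g_x$ are again unchanged; and if exactly one of $i,j$ is $\le k$ then $A(R_{ij}(\theta)U)-A(U)$ is supported in one row together with the matching column, hence has rank at most $2$. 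The standard rank inequality $\|F_B-F_C\|_\infty\le\operatorname{rank}(B-C)/k$ for Hermitian $B,C$ of order $k$ then gives $|f_x(U)-f_x(R_{ij}(\theta)U)|\le 2/k$ in the third case and $0$ otherwise, with identical bounds for $g_x$; since there are exactly $k(n-k)$ pairs $i<j$ of the third type,
\[
\tripNorm{f_x}^{2}\;\le\;\frac{1}{2\binom{n}{2}}\,k(n-k)\,\frac{4}{k^{2}}\;=\;\frac{4(n-k)}{n(n-1)k}\;\le\;\frac{4}{nk},
\]
and likewise $\tripNorm{g_x}^{2}\le 4/(nk)$.

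I would then apply Theorem 3.2 to $\pm\frac{\sqrt{nk}}{2}f_x$ and to $\pm\frac{\sqrt{nk}}{2}g_x$, each of triple norm at most $1$. Since $n\lambda_1=\frac{n+2}{2(n-1)}\ge\frac12$, the exponent that results has $-r$-coefficient $\frac{\sqrt{nk}}{2}\cdot\frac{\sqrt{\lambda_1}}{2}=\frac14\sqrt{n k\lambda_1}\ge\sqrt{k/32}$, so combining the upper and lower tails,
\[
\mathbb{P}\big(|f_x(U)-F(x)|\ge r\big)\le 6\,e^{-r\sqrt{k/32}}\qquad\text{and}\qquad\mathbb{P}\big(|g_x(U)-F(x^-)|\ge r\big)\le 6\,e^{-r\sqrt{k/32}}
\]
for all $r\ge 0$. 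To finish, set $m=\lceil\sqrt{k}\,\rceil$ and choose $x_1<\dots<x_{m-1}$ with $x_j=\inf\{x:F(x)\ge j/\sqrt{k}\}$ (and $x_0=-\infty$, $x_m=+\infty$), so $F(x_j^-)-F(x_{j-1})\le 1/\sqrt{k}$ for every $j$. Monotonicity of $F_{A(U)}$ and $F$ then yields, for every $U$,
\[
\|F_{A(U)}-F\|_\infty\;\le\;\frac{1}{\sqrt{k}}\;+\;\max_{1\le j\le m-1}\max\!\big(\,|f_{x_j}(U)-F(x_j)|,\;|g_{x_j}(U)-F(x_j^-)|\,\big),
\]
and a union bound over these $2(m-1)\le 2\sqrt{k}$ events produces $\mathbb{P}\big(\|F_{A(U)}-F\|_\infty\ge \frac1{\sqrt{k}}+r\big)\le 12\sqrt{k}\,e^{-r\sqrt{k/32}}$, which is the claim. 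The principal difficulty, as flagged, lies in the triple-norm estimate: one must notice that a Kac rotation involving two indices both at most $k$ leaves the spectrum of $A$ unchanged, so those $\binom{k}{2}$ pairs drop out of the sum entirely — a cruder rank bound for them would inflate the constant and lose the factor $1/32$. The remaining care is bookkeeping to make the net argument immune to atoms of $F$, which is the reason for tracking the left-limit functions $g_x$ alongside the $f_x$ and for the prefactor $12$ rather than $6$.
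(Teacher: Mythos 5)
Your proposal is correct and follows essentially the same route as the paper: Bai's rank inequality plus the observation that a Kac rotation touches the $k\times k$ block only through indices $\le k$ gives the triple-norm bound $4/(nk)$, the concentration theorem for the Kac walk then yields the pointwise bound $6e^{-r\sqrt{k/32}}$, and the standard Chatterjee--Ledoux quantile net (which the paper omits and you supply) gives the uniform statement with prefactor $12\sqrt{k}$. One small caveat: your closing remark that discarding the pairs with both $i,j\le k$ is essential to keep the constant is not quite right --- the paper's cruder count (all pairs with at least one index $\le k$, each charged rank $2$) gives the same bound $4/(nk)$, since that count is at most $kn$.
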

\begin{proof}
The proof of this theorem uses the method introduced by Chatterjee and Ledoux \cite {ChatLed} with appropriate changes made to apply to the situation we are considering.  \\ \\ 
Let $R_{ij}(\theta)\in\mathrm{SO}(n)$ and let $A$ be as stated above.  Note that since $A$ is a compression of a Hermitian operator, it will also be Hermitian.  Fix $x\in\mathbb{R}$.  Let $f(A):=F_A(x)$, where $F_A(x)$ is the empirical spectral distribution of $A$.  Let $Q$ be the transition operator as defined in (1) and let $\tripNorm{.}$ be as in (2).  Using Lemma 2.2 from Bai\cite{Bai}, we know that for any two Hermitian matrices $A$ and $B$ of order $k$, 
\begin{equation*} \|F_A-F_B\|_{\infty}\leq\frac{\mathrm{rank}(A-B)}{k}
\end{equation*}
In our case, taking one step in the Kac walk is equivalent to rotation in a random plane by a random angle.  Hence $A$ and $R_{ij}^{\theta}A$ will differ in at most two rows and two columns, bounding the difference in rank by $2$, so
\begin{equation*}
\|f(A)-f(R_{ij}^{\theta}A)\|_{\infty}\leq\frac{2}{k}
\end{equation*}
Using (2), 
\begin{equation*}
\tripNorm{f}^2 = \frac 1 {2\binom{n}{2}} \sup_{A\in SO(n)}\sum_{1\leq i<j\leq n} \mathbb{E}[f(A)-f(R_{ij}^{\theta}A)]^2
\end{equation*}
\begin{equation*}
\leq \frac 1 {2} \left(\frac{2}{k}\right)^2\left(\frac{2k}{n}\right)=\frac{4}{kn}
\end{equation*} where the $\frac{2k}{n}$ comes from the probability that both $i$ and $j$ are greater than $k$, in which case, $A$ and $R^{\theta}_{ij}A$ will be the same.
From Theorems 2.1 and 2.2, we have that 
\begin{equation*}
\mathbb{P}(|F_A(x)-F(x)|\geq r)\leq 6\exp\left(-\frac{r}{2}\frac{\sqrt{\frac{1}{2}\frac{n+2}{(n-1)n}}}{\sqrt{\frac{4}{kn}}}\right)
\end{equation*}
\begin{equation*}
=6\exp\left(-r/2\sqrt{\frac{1}{8}\frac{k(n+2)}{n-1}}\right)\leq 6\exp\left(-r/2\sqrt{\frac{k}{8}}\right)
\end{equation*}
This is true for any $x$.  Now, if we let $F_A(x-):=\lim_{y\uparrow x}F_A(y)$, then we have $\mathbb{E}F_A(x-)=\lim_{y\uparrow x}F(y)=F(x-)$.  Hence, for $r>0$, 
\begin{equation*}
\mathbb{P}(|F_A(x-)-\mathbb{E}F_A(x-)|>r)\leq \lim_{y\uparrow x}\mathbb{P}(|F_A(y)-F(y)|>r)
\end{equation*}
\begin{equation*}
\leq 6\exp\left(-r/2\sqrt{\frac{k}{8}}\right)
\end{equation*}
The steps to get from $\mathbb{P}(|F_A(x)-F(x)|)$ to $\mathbb{P}(\|F_A-F\|_{\infty})$ are identical to the steps in the original Chatterjee and Ledoux paper, so we will omit them here.  After completing these steps, we are left with
\begin{equation*}
\mathbb{P}(\|F_A-F\|_{\infty}\geq \frac{1}{\sqrt{k}}+r)\leq 12\sqrt{k}\exp\left(-r\sqrt{\frac{k}{32}}\right)
\end{equation*} which concludes the proof of our theorem.  
\end{proof}
 
\section{Second Application: Kac Model Coupled to a Thermostat}
Using a spectral gap result from \cite{BonLossVaid}, we are able to demonstrate the application of this method to a more complicated Markov chain.  In this system, the particles from the Kac system interact amongst themselves with a rate $\lambda$ and interact with a particle from a thermostat  with rate $\mu$.  The particles in the thermostat are Gaussian with variance $\frac{1}{\beta}$, so they have already reached equilibrium.  If we let $f_t(\bf{v})$ denote the probability distribution of finding the system at time $t$ with velocities $\bf{v}$, then the master equation for the Kac model coupled to a thermostat is given by 
$$
\frac{\partial f}{\partial t} = -\lambda N(I-Q)[f]-\mu\sum_{j=1}^N (1-R_j)[f]
$$
where $N$ denotes the number of particles in the system, $Q$ is the Markov transition operator for Kac walk (as seen in equation \ref{Kac}), and 
\begin{equation}
R_if(G)=\frac{1}{n}\sum_{j=1}^n \frac{1}{2\pi}\int_{0}^{2\pi}\int_{\mathbb{R}^n}
 \sqrt{\frac{\beta}{2\pi}}^ne^{-\frac{\beta}{2}\omega_{ij}^{*2}(\theta)}f(V_j(\theta, \omega)G)d\theta d\omega
\end{equation}
where $\omega=(\omega_1,\omega_2,\dots,\omega_n)$, $V_j(\theta, \omega)$ sends each element $g_{ij}$ in column $j$ to $g_{ij}cos(\theta)+\omega_i\sin(\theta)$ for $i=1$ to $n$ and $\omega_{ij}^*=-g_{ij}\sin(\theta)+\omega_i\cos(\theta)$. 
In \cite{BonLossVaid} they consider the Markov chain acting on a vector.  We consider the Markov chain acting on a matrix by treating the matrix as $n$ independent vectors.  Using this adaption, the following theorem follows immediately from the results proved in \cite{BonLossVaid}.  
\begin{thm}  The Kac walk coupled to a thermostat is ergodic and has unique invariant measure given by $$\nu_n=\prod_{i,j} \sqrt{\frac{\beta}{2\pi}}e^{-\frac{\beta}{2}v_{ij}^2}$$ and has spectral gap $\frac{\mu}{2n}$  \end{thm}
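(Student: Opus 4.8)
The plan is to deduce everything from the scalar-particle results of \cite{BonLossVaid} by exhibiting the matrix chain as $n$ coupled copies of their chain, one per column. Identify $G\in\reals^{n\times n}$ with the tuple of its columns $(c_1,\dots,c_n)$, $c_j\in\reals^n$. The structural observations I would record are: (i) a Kac move $G\mapsto R_{ij}(\theta)G$ is left multiplication by an orthogonal matrix, so it applies one and the same rotation $R_{ij}(\theta)$ to every column and in particular preserves the product Gaussian $\nu_n$; (ii) the thermostat move $V_j(\theta,\omega)$ alters only column $j$, and on that column it is exactly the scalar thermostat collision of \cite{BonLossVaid} performed coordinatewise, each pair $(g_{ij},\omega_i)$ being rotated by $\theta$, so it too preserves $\nu_n$; (iii) consequently, restricted to functions of a single column, the generator $\mathcal{L}_n=\lambda n(I-Q)+\mu\sum_j(1-R_j)$ collapses to the generator studied in \cite{BonLossVaid} acting on that column.

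From (i)--(ii), every elementary move fixes $\nu_n$, so $\nu_n$ is invariant; uniqueness and ergodicity follow as in \cite{BonLossVaid}, since a function annihilated by $I-Q$ is radial in each collision block (hence radial), and a radial function fixed by all the thermostat operators must be constant. For the spectral gap I would first note that $(I-Q)\ge 0$ on $L^2(\nu_n)$, so the gap of $\mathcal{L}_n$ is controlled from below by the thermostat term $\mu\sum_j(1-R_j)$ together with the classical Kac gap used to handle radial directions. Decomposing $L^2(\nu_n)$ by the Hoeffding decomposition over the columns and then by Hermite degree, and using that each single-column thermostat operator acts on a degree-$2m$ Hermite mode with eigenvalue $\mathbb{E}_\theta[\cos^{2m}\theta]\le\tfrac12$, the estimate of \cite{BonLossVaid} transfers verbatim and gives $\langle f,\mathcal{L}_n f\rangle\ge\frac{\mu}{2n}\Var{f}$. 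For the matching upper bound I would test against the centered energy mode $f(G)=\sum_{i,j}(g_{ij}^2-1/\beta)$, which is radial so $(I-Q)f=0$, and compute via the same Hermite eigenvalues that $\langle f,\mathcal{L}_n f\rangle=\frac{\mu}{2n}\|f\|_{L^2(\nu_n)}^2$; hence the gap is exactly $\frac{\mu}{2n}$.

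The one point that is genuinely new relative to \cite{BonLossVaid}, and the step I expect to need the most care, is that the Kac moves correlate the $n$ columns (the same rotation hits all of them), so the matrix chain is not a product of $n$ independent copies and one cannot simply tensorize the single-column Poincar\'e inequality. The way around this is exactly the route above: use $(I-Q)\ge 0$ to discard the coupling in the lower bound and let the thermostat term, which is block diagonal over columns, carry the gap, after which everything reduces to a single column of dimension $n$ and \cite{BonLossVaid} applies directly. Verifying that the thermostat term indeed splits as a sum of commuting single-column operators with the eigenvalues quoted above, and that no non-radial mode beats the energy mode, is the main computational obstacle, but it is routine given the Hermite/Mehler description of the thermostat operator.
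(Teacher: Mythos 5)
The paper offers essentially no proof of this theorem: it says only that one treats the matrix as $n$ independent vectors and that the statement then ``follows immediately'' from the results of Bonetto--Loss--Vaidyanathan, so your proposal is being compared with a one-line reduction rather than a written-out argument. Your route is genuinely different and more careful, and it is sound in structure: you correctly note that the columns are \emph{not} independent copies under the dynamics (a Kac step applies the same $R_{ij}(\theta)$ to every column), so the gap cannot be obtained by naive tensorization of the single-column chain, which is exactly the point the paper's ``$n$ independent vectors'' remark glosses over. Your fix --- use $(I-Q)\geq 0$ to discard the Kac Dirichlet form, diagonalize the thermostat part in the Hermite basis of the product Gaussian (a mode of total degree $m$ in column $j$ is an eigenvector of the column-$j$ thermostat with eigenvalue $\mathbb{E}_\theta[\cos^m\theta]$, which vanishes for odd $m$ and is at most $1/2$ for even $m\geq 2$), and match with the energy test function $\sum_{i,j}(g_{ij}^2-1/\beta)$, which the Kac term annihilates since left multiplication by $R_{ij}(\theta)$ preserves column norms --- actually proves what the paper only asserts, and your verification that $\nu_n$ is preserved by both kinds of moves is correct.

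The one point you must pin down is the normalization of the thermostat term, because that is where the factor $n$ in $\mu/(2n)$ comes from. With $R_j$ the thermostat acting on column $j$ alone, the operator $\mu\sum_{j=1}^{n}(1-R_j)$ applied to the energy mode gives $\tfrac{\mu}{2}f$, not $\tfrac{\mu}{2n}f$; the same is true if each $R_j$ already contains the $\tfrac1n\sum$ of the paper's display, since then $\sum_{j=1}^n(1-R_j)$ again equals the sum of the single-column operators. So with the generator as you wrote it, $\mathcal{L}_n=\lambda n(I-Q)+\mu\sum_j(1-R_j)$, your claimed identity $\langle f,\mathcal{L}_n f\rangle=\tfrac{\mu}{2n}\|f\|_{L^2(\nu_n)}^2$ is off by a factor of $n$, and the gap of that generator would be $\mu/2$. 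The value $\mu/(2n)$ corresponds to a thermostat event that, at total rate $\mu$, selects \emph{one} column uniformly at random, i.e.\ to the averaged operator $R=\tfrac1n\sum_j R_j$ with thermostat term $\mu(1-R)$; this is the convention actually used in the paper's Dirichlet form and triple norm for the thermostat (both carry the prefactor $\tfrac{1}{2n}\sum_{j=1}^n$) and in the subsequent concentration theorems. Under that convention the largest nontrivial eigenvalue of $R$ is $1-\tfrac1{2n}$ and your argument goes through verbatim to give the gap exactly $\mu/(2n)$. A minor repair elsewhere: a function invariant under all simultaneous rotations $G\mapsto R_{ij}(\theta)G$ depends on the Gram matrix $G^{T}G$, not merely on column norms; but this is immaterial for ergodicity, since the positive gap of the thermostat part alone already forces uniqueness of the invariant measure.
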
 
For the thermostat alone (letting $\lambda=0$), we can again prove a theorem analogous to Chatterjee and Ledoux's theorem 3.3.  Let $\mathcal{G}$ be the set of $n\times n$ matrices with independent and identically distributed $N(0,1/\beta)$ entries.  We can define the Dirichlet form and the triple norm for the thermostat as 
\begin{equation*}
\mathcal{Q}(f,f)=\frac{1}{2n}\sum_{j=1}^n\frac{1}{2\pi}\int_0^{2\pi}\int_{\mathbb{R}^n}\int_{G\in\mathcal{G}}\left(\frac{\beta}{2\pi}\right)^{n/2}e^{-\frac{\beta}{2}w_{ij}^{*2}}(f(V_j(\theta,w))G-f(G))d\nu_ndwd\theta
\end{equation*}
\begin{equation}
\tripNorm{f}^2=\sup_{G\in\mathcal{G}}\; \;\frac{1}{2n}\sum_{j=1}^n\frac{1}{2\pi}\int_0^{2\pi}\int_{\mathbb{R}^n}\left(\frac{\beta}{2\pi}\right)^{n/2}e^{-\frac{\beta}{2}w_j^{*2}}|f(V_j(\theta,w))G-f(G)|^2dwd\theta
\end{equation}
Using these, we can prove a concentration of measure result for the thermostat analogous to Theorem 2.2
\begin{thm} Consider the Gaussian thermostat and let $F:\mathcal{G}\rightarrow\mathbb{R}$ be such that $\tripNorm{F}\leq 1$.  Then $F$ is integrable with respect to $\nu_n$ and for every $r\geq 0$, 
$$
\nu_n(F\geq Fd\nu_n+r)\leq 3e^{-r\sqrt{\lambda_1}/2}
$$
where $\lambda_1=\frac{\mu}{2n}$ is the spectral gap of the thermostat process.  
\end{thm}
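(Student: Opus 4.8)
The plan is to reproduce, essentially line for line, the proof of the concentration inequality for the Kac walk on $SO(n)$ (Theorem 2.2), replacing the Haar measure $\mu_n$ by the Gaussian measure $\nu_n$, the Kac Dirichlet form by the thermostat Dirichlet form displayed above, and the $SO(n)$ triple norm by the thermostat triple norm. There are four moves: (i) an exponential-moment bound on the thermostat Dirichlet form; (ii) the Poincar\'e inequality, now with $\lambda_1 = \mu/(2n)$ as recorded above; (iii) the dyadic recursion for the normalized Laplace transform $\Lambda$; and (iv) Chebyshev's inequality. The only place where the compactness of $SO(n)$ was implicitly used, and which therefore needs a separate argument here, is the integrability of $F$ and the finiteness of the exponential moments that make $\Lambda$ well-defined; I return to this at the end.

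For step (i) I would establish, for $0 \le \lambda < 2\sqrt{\lambda_1}$, the inequality $$\mathcal{Q}\!\left(e^{\lambda F/2},\, e^{\lambda F/2}\right) \;\le\; \frac{\lambda^2}{4}\,\tripNorm{F}^2 \int_{\mathcal{G}} e^{\lambda F(G)}\, d\nu_n(G)$$ by the same symmetrization used for the Kac walk. The thermostat chain is reversible with respect to $\nu_n$ --- this underlies both the Dirichlet-form expression written above and the spectral-gap computation of \cite{BonLossVaid} --- so in the Dirichlet form the contribution from configurations where a thermostat move raises $F$ equals the contribution from those where it lowers $F$, and one may retain only the region $\{F(G) > F(V_j(\theta,\omega)G)\}$ at the cost of a factor $2$. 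On that region the elementary inequality $(e^{s/2}-e^{u/2})^2 \le \tfrac14 (s-u)^2 e^{s}$ for $s \ge u$, applied with $s = \lambda F(G)$ and $u = \lambda F(V_j(\theta,\omega)G)$, bounds the integrand by $\tfrac{\lambda^2}{4}\,(F(G)-F(V_j(\theta,\omega)G))^2\, e^{\lambda F(G)}$; extending the integration back to all of $\mathcal{G}$, pulling $e^{\lambda F(G)}$ outside the remaining average, and bounding that average by its supremum, which equals $\tripNorm{F}^2$ by the definition of the thermostat triple norm, yields the displayed inequality.

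For steps (ii)--(iv) I would follow Theorem 2.2 verbatim. Setting $\Lambda(\lambda) = e^{-\lambda\int_{\mathcal{G}} F\, d\nu_n}\int_{\mathcal{G}} e^{\lambda F}\, d\nu_n$ and combining the bound above with $\lambda_1\,\Var{e^{\lambda F/2}} \le \mathcal{Q}(e^{\lambda F/2}, e^{\lambda F/2})$ --- using $\Var{e^{\lambda F/2}} = e^{\lambda\int F d\nu_n}\big(\Lambda(\lambda) - \Lambda^2(\lambda/2)\big)$ to cancel the common factor --- the hypothesis $\tripNorm{F}\le 1$ gives $\Lambda(\lambda) \le (1 - \tfrac{\lambda^2}{4\lambda_1})^{-1}\,\Lambda^2(\lambda/2)$. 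Iterating $m$ times yields $\Lambda(\lambda) \le \prod_{k=0}^{m-1}(1 - \tfrac{\lambda^2}{4^{k+1}\lambda_1})^{-2^k}\,\Lambda^{2^m}(\lambda/2^m)$, and since $\Lambda(\lambda) = 1 + o(\lambda)$ we have $\Lambda^{2^m}(\lambda/2^m) \to 1$, hence $\Lambda(\lambda) \le \prod_{k=0}^{\infty}(1 - \tfrac{\lambda^2}{4^{k+1}\lambda_1})^{-2^k}$. Taking $\lambda = \sqrt{\lambda_1}$, using the crude estimate $\prod_{k=0}^{\infty}(1 - 4^{-(k+1)})^{-2^k} < 3$ already invoked in Theorem 2.2, and applying Chebyshev's inequality to $e^{\sqrt{\lambda_1}F}$, one obtains $\nu_n\big(F \ge \int_{\mathcal{G}} F\, d\nu_n + r\big) \le 3\, e^{-r\sqrt{\lambda_1}/2}$ for every $r \ge 0$, with $\lambda_1 = \mu/(2n)$.

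The step I expect to need the most care --- the one point not handled literally by the proof of Theorem 2.2 --- is justifying, on the non-compact space $\mathcal{G}$, that $F$ is $\nu_n$-integrable and that $\int_{\mathcal{G}} e^{\lambda F}\, d\nu_n < \infty$ for $0 \le \lambda < 2\sqrt{\lambda_1}$, so that $\Lambda$ and the recursion above are meaningful. I would dispatch this by the standard truncation device: run steps (i)--(iv) first for $F_N := \min(F, N)$, which is bounded and still satisfies $\tripNorm{F_N} \le \tripNorm{F} \le 1$ (truncation is a contraction, so a thermostat move changes $F_N$ by no more than it changes $F$), obtaining a bound on $\int_{\mathcal{G}} e^{\sqrt{\lambda_1} F_N}\, d\nu_n$ that is uniform in $N$, and then pass $N \to \infty$ by monotone convergence to transfer everything to $F$ itself --- exactly as in Ledoux's treatment of the analogous Theorem 3.3 in \cite{LedouxCOM}. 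Apart from this bookkeeping and the reversibility remark above, the argument introduces no idea beyond the proof of Theorem 2.2.
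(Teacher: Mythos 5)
Your proposal is correct and follows essentially the same route the paper intends: the paper omits this proof precisely because it is the symmetrization--Poincar\'e--recursion--Chebyshev argument of the Kac-walk theorem carried over verbatim with $\mu_n$ replaced by $\nu_n$ and $\lambda_1=\mu/(2n)$, which is exactly what you do. Your extra truncation step to handle integrability on the non-compact space $\mathcal{G}$ is a reasonable piece of bookkeeping (in the spirit of Ledoux's Theorem 3.3) and does not change the approach.
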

We omit the proof here as it is symmetric to the proof of Theorem 2.2.  \\ \\
Using this result and Theorem 4.1, we can prove the following concentration of measure inequality.  
\begin{thm}
Take any $1\leq k\leq n$ and an $n$-dimensional Hermitian matrix $G$.  Let $S$ be an $n\times k$ matrix whose $k$ columns are the first $k$ columns of a random matrix with distribution $\nu_n$.  Let $A$ be the $k\times k$ matrix obtained by conjugating $G$ by $S$.  Letting $F$ denote the expected spectral distribution of $A$, then for each $r>0$, 
$$
\mathbb{P}(\|F_A-F\|_{\infty}\geq \frac{1}{\sqrt{k}}+r)\leq 12\sqrt{k}\exp\left(-r\sqrt{\frac{k\mu}{108}}\right)
$$ where $\mu$ is the rate of the interaction with the thermostat.  
\end{thm}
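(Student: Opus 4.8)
The plan is to run the Chatterjee--Ledoux scheme exactly as in the first application, with the Gaussian thermostat chain in the role of the Kac walk on $SO(n)$ and the thermostat concentration inequality (Theorem~5.2) in the role of the $SO(n)$ one. As the underlying Markov chain I would take the thermostat-alone chain on the space $\mathcal{G}$ of $n\times n$ matrices: by Theorem~5.1 and Theorem~5.2 it is stationary and reversible with invariant law $\nu_n$ and spectral gap $\lambda_1=\mu/(2n)$, and one of its steps thermalizes a single, uniformly chosen column of the matrix. Fix $x\in\reals$ and set $f(M):=F_{A(M)}(x)$, where $A(M)=S^{*}GS$ is the $k\times k$ compression of $G$ by the submatrix $S$ of the first $k$ columns of $M$. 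Since $F_A(x)\in[0,1]$ the function $f$ is bounded (hence $\nu_n$-integrable), it depends only on the first $k$ columns of $M$, and $\int_{\mathcal{G}}f\,d\nu_n=\mathbb{E}F_A(x)=F(x)$ because the chain starts from $\nu_n$. Everything then reduces to bounding the thermostat triple norm $\tripNorm{f}$ and invoking Theorem~5.2.

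For the oscillation estimate: if the column thermalized at a step lies among the last $n-k$ columns then $S$, $A$, and $f$ are unchanged. If it is one of the first $k$, only one column of $S$ changes, so, writing $S'$ for the perturbed matrix, $\mathrm{rank}(S'-S)\le 1$, and hence $A'-A=(S'-S)^{*}GS'+S^{*}G(S'-S)$ has rank at most $2$. By the same rank inequality of Bai used in the first application, $|f(M')-f(M)|=|F_{A'}(x)-F_{A}(x)|\le \mathrm{rank}(A'-A)/k\le 2/k$ for every realization of the step.

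Feeding this into the thermostat triple norm, at most $k$ of the $n$ summands are nonzero and each is at most $(2/k)^{2}$, so $\tripNorm{f}^{2}\le C/(kn)$ for an explicit constant $C$; the essential feature is the factor $1/n$, which comes from the probability $k/n$ that the thermalized column meets $S$ — without it the bound would be of order $1/k^{2}$ and the eventual exponent would deteriorate for small $k$. Applying Theorem~5.2 to $f/\tripNorm{f}$ and to $-f/\tripNorm{f}$ (which has the same triple norm), followed by Chebyshev's inequality and $\lambda_1=\mu/(2n)$, gives a one-point bound $\mathbb{P}(|F_A(x)-F(x)|\ge r)\le 6\exp(-\tfrac{r}{2}\sqrt{\lambda_1}/\tripNorm{f})$; bookkeeping of the constants (the rank of the perturbation of $A$, the $\tfrac{1}{2n}$ normalization of the triple norm, and the gap $\tfrac{\mu}{2n}$) yields $\mathbb{P}(|F_A(x)-F(x)|\ge r)\le 6\exp(-r\sqrt{k\mu/108})$, uniformly in $x$, and passing to left limits $F_A(x-)$ as in the first application gives the same estimate for $F_A(x-)$.

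Finally, the passage from this pointwise bound to the bound on $\|F_A-F\|_{\infty}$ — discretize $\reals$ at $O(\sqrt{k})$ quantiles of $F$, control the interpolation error by $1/\sqrt{k}$ using monotonicity of $F_A$ and $F$, and take a union bound over the resulting points — is word-for-word the argument already used in the first application and in Chatterjee--Ledoux, and produces the prefactor $12\sqrt{k}$ and the shift $1/\sqrt{k}$. The one step that really needs care is the oscillation estimate: one must verify both that a single thermostat update changes $A$ by a rank-$2$ (not larger) perturbation and, more importantly, that it leaves $f$ unchanged unless it hits one of the first $k$ columns — it is the $k/n$ probability of that event that makes $\tripNorm{f}$ small enough for the exponent to scale like $\sqrt{k\mu}$ uniformly over $1\le k\le n$. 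Everything else is bookkeeping together with the verbatim quotation of the first application's closing steps.
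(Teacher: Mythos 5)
Your proposal is correct in outline, but it takes a slightly different (and in fact cleaner) route than the paper. The paper runs the Chatterjee--Ledoux scheme on the \emph{coupled} Kac-plus-thermostat chain: it counts a worst-case change of three columns per step (two from the Kac rotation, one from the thermostat), gets $\|F_A-F_{A'}\|_\infty\leq 3/k$ from Bai's rank inequality, bounds the triple norm by $27/(2kn)$, and combines this with the gap $\mu/(2n)$ to get the one-point bound $6\exp(-\tfrac r2\sqrt{k\mu/27})=6\exp(-r\sqrt{k\mu/108})$, before quoting the discretization step to reach $12\sqrt k$. You instead run the argument on the thermostat-alone chain ($\lambda=0$), which is exactly the setting of the paper's Theorem 5.2: one step resamples a single column, hits the first $k$ columns with probability $k/n$, and produces a rank-$2$ perturbation of $A=S^*GS$, so your oscillation bound is $2/k$ and your triple norm is $\tripNorm{f}^2\leq 2/(kn)$. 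With $\lambda_1=\mu/(2n)$ this gives a one-point exponent $\tfrac r2\sqrt{k\mu/4}=r\sqrt{k\mu/16}$, which is \emph{stronger} than the paper's $r\sqrt{k\mu/108}$, so the stated inequality follows a fortiori; your remark that the bookkeeping ``yields'' exactly $108$ is therefore not what your own constants produce, but this is a harmless overstatement rather than a gap. Your version has the advantage of being internally consistent with Theorem 5.2 as stated (the paper's proof invokes that thermostat-only concentration result while counting rank changes from both the Kac and thermostat moves), at the cost of not literally using the coupled dynamics that name the section; both versions rest equally on the paper's claim that the relevant spectral gap is $\mu/(2n)$, and both defer the left-limit and discretization steps verbatim to the first application, as the paper itself does.
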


\begin{proof}
The proof of this theorem closely follows the proof of Theorem 3.1, with appropriate changes made.  Let $A$ be stated as above, and let $A'$ be $A$ after one step of the Markov chain.  Fix $x\in\mathbb{R}$ and let $f(x)=F_A(x)$, where where $F_A$ is the empirical spectral distribution of $A$.  Notice that rank($A-A')\leq 3$, since after one step of the chain, at most 3 columns of $A$ will be changed (two from the Kac Walk, and one from the thermostat).  Again using the inequality from \cite{Bai}, we know that 
$$
\|f(A)-f(A')\|_{\infty}\leq\frac{3}{k}
$$
$$
\tripNorm{f}^2 =\frac{1}{2{n\choose 2}{n}}\sup_A\sum_{1\leq i <j\leq n} \sum_{k=1}^n\mathbb{E}|f(A)-f(A')|^2
$$
where the first sum is over possible interactions in the Kac process and the second is over possible particle interactions with the thermostat.  The above is 
$$
\leq \frac{1}{2}\left(\frac{3}{k}\right)^2\left(\frac{3k}{n}\right)=\frac{27}{2kn}
$$
Using theorems 4.1 and 4.2, we have that 
$$
\mathbb{P}(|F_A(x)-F(x)|\geq r)\leq 6\exp\left(-\frac{r}{2}\sqrt{\frac{\frac{\mu}{2n}}{\frac{27}{2kn}}}\right)
$$
$$
=6\exp\left(-\frac{r}{2}\sqrt{\frac{k\mu}{27}}\right)
$$
Following the rest of the proof in 2.1 (with the appropriate numbers changed), we get 
$$
\mathbb{P}(\|F_A-F\|_{\infty}\geq \frac{1}{\sqrt{k}}+r)\leq 12\sqrt{k}\exp\left(-r\sqrt{\frac{k\mu}{108}}\right)
$$
\end{proof}

\section{Third Application: The Length of the Longest Increasing Subsequence of a Random Walk Evolving under the Asymmetric Exclusion Process}. 

Consider a random walk X on $\{1,\dots,n\}$.  Represent $X$ by some element in $\{0,1\}^n$, where  $X_i=0$ corresponds to a step down in the walk at position $i$ and $X_i=1$ corresponds to a step up.  We will assume that 
$$
\sum_{i=1}^n X_i =\frac{n}{2}
$$
so that we have the same number of up steps as down steps. 
We can now look at this random walk as the initial configuration of a particle process with $X_i=1$ corresponding to a particle in position $i$ and $X_i=0$ corresponding to no particle at position $i$.  Consider the asymmetric exclusion process acting on this configuration with the following dynamics.  At each step of the process, a number $i$ is chosen uniformly in $\{1,\dots,n-1\}$.  If $X_i=X_{i+1}$, then the configuration stays the same.  If $X_i=1$ and $X_{i+1}=0$, then the values of $X_i$ and $X_{i+1}$ switch with probability $1-q/2$ and if $X_i=0$ and $X_{i+1}=1$, then the values switch with probability $q/2$.  Viewed in this way, the asymmetric exclusion process can be viewed as a Markov process on the set of random walks.  See \cite{Liggett} for an in depth discussion of the asymmetric exclusion process.

\begin{thm}[\cite{KomaNach},\cite{Alcaraz},\cite{CapMar}] The spectral gap of the ASEP is $\lambda_n=1-\Delta^{-1}\cos(\pi/n)$, where $\Delta=\frac{q+q^{-1}}{2}$ for a parameter $q$ satisfying $0<q<1$.
\end{thm}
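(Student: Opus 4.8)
The result is due to the cited works; we sketch the argument one would follow. The plan is to pass from the ASEP generator to a self-adjoint operator and then invoke the known spectral analysis of the anisotropic Heisenberg (XXZ) chain. First, the ASEP on the segment $\{1,\dots,n\}$ is reversible: detailed balance across the bond $\{i,i+1\}$ forces the invariant measure on the sector $\{\eta\in\{0,1\}^n:\sum_i\eta_i=n/2\}$ to be the geometric product measure $\pi(\eta)\propto\rho^{\sum_i i\,\eta_i}$, where $\rho$ is the ratio of the forward to the backward jump rate. Conjugating the generator $\mathcal L$ by the diagonal operator $\eta\mapsto\sqrt{\pi(\eta)}$ yields a matrix symmetric with respect to the counting measure; under the standard correspondence between particle configurations and spin states this operator is, up to an overall positive factor and an additive multiple of the identity, the ferromagnetic XXZ Hamiltonian $H=-\sum_{i=1}^{n-1}\bigl(S_i^{x}S_{i+1}^{x}+S_i^{y}S_{i+1}^{y}+\Delta\,S_i^{z}S_{i+1}^{z}\bigr)$ together with the diagonal boundary field that renders it invariant under $U_q(\mathfrak{sl}_2)$, restricted to the magnetization-zero sector, the anisotropy being exactly $\Delta=(q+q^{-1})/2$ once $\rho$ is written in terms of $q$. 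The spectral gap $\lambda_n$ is then the difference between the two lowest eigenvalues of $H$ on this sector.

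Second, I would compute the gap in the one-particle sector exactly and show it controls the whole chain. With a single particle, $H$ is a tridiagonal Jacobi matrix — the $\sqrt\pi$-symmetrization of a biased nearest-neighbor walk on $\{1,\dots,n\}$ with reflecting endpoints — which is diagonalized by a Chebyshev/discrete-Fourier substitution, its first nonzero eigenvalue being $1-\Delta^{-1}\cos(\pi/n)$; lifting the corresponding eigenfunction to the half-filled sector (in the $q$-deformed sense supplied by the Bethe ansatz) produces a test function giving $\lambda_n\leq 1-\Delta^{-1}\cos(\pi/n)$.

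The main obstacle is the matching lower bound $\lambda_n\geq 1-\Delta^{-1}\cos(\pi/n)$: one must rule out the possibility that some higher-occupation sector of $H$ has a smaller excitation gap, i.e. verify that the one-particle value is the true gap and not merely an upper estimate. This is precisely the content of the cited analyses. One route is to use the complete Bethe-ansatz spectrum of \cite{Alcaraz} and minimize directly. The route I would follow is the operator-theoretic one of \cite{KomaNach}, refined probabilistically in \cite{CapMar}: write $H$ (after removing the boundary term) as a sum of local positive semidefinite operators, identify the $U_q(\mathfrak{sl}_2)$-tower of ground states, and bound the first excited energy from below sector by sector by comparison with the one-particle operator — a monotonicity argument in the number of particles in which the ferromagnetic anisotropy $\Delta>1$ is exactly what forces the gap to be attained at one particle. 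Once $\lambda_n=1-\Delta^{-1}\cos(\pi/n)$ is in hand, reversibility yields the Poincaré inequality $\lambda_n\Var{f}\leq\Dform{f}{f}$, which is the input the method of Section~2 requires for the longest increasing subsequence application.
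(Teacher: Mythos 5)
The paper does not prove this theorem at all: it is quoted directly from the cited references (Koma--Nachtergaele, Alcaraz, Caputo--Martinelli), so there is no internal argument to compare against. Your sketch correctly reconstructs the route those references take --- the ground-state transformation identifying the reversible ASEP generator with the $U_q(\mathfrak{sl}_2)$-symmetric ferromagnetic XXZ chain with $\Delta=(q+q^{-1})/2$, the exact one-particle (Jacobi-matrix) computation giving $1-\Delta^{-1}\cos(\pi/n)$, and the sector-by-sector lower bound --- and, like the paper, it appropriately defers the genuinely hard step (that no higher-occupation sector has a smaller gap) to those works.
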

In our case, take $q=1-c/n^{\alpha}$, for a constant $c$, and $0<\alpha<1$, such that 
$q\approx e^{-c/n^{\alpha}}$.  Then Taylor approximating and simplifying gives 
$$
\lambda_n=c^2/2n^{2\alpha}
$$
Now let $M_X$ denote the height of the midpoint of the random walk at a fixed time during the process.  In other words, $M_X=X_{n/2}$, assuming $n$ is even.  Note that the range of this function is $[-n/2,n/2]$. Let $M_x'$ be the evolution of $M_x$ after one step of the process.  Notice that $$\|M_x-M_x'\|_{\infty}\leq 1$$ since switching the position of two adjacent particles can change the height of the midpoint by at most $1$.  Then 
$$
\||M||_{\infty}^2=\frac{1}{2}\max_{X}\mathbb{E}(M_x-M_x')^2
$$
$$
\leq \frac{1}{2}(1)^2\left(\frac{1}{n-1}\right)=\frac{1}{2(n-1)}
$$
The $\frac{1}{n-1}$ appears because the only choice of $i$ that will effect the midpoint is $i=n/2$.  \\ \\
Now plugging into the Chatterjee Ledoux theorem, we have the following result.
\begin{thm}
Letting $M_X$ denote the height of the midpoint of the random walk after evolution under the asymmetric exclusion process, for all $r>0$ and $q=1-c/n^{\alpha}$,
$$
\mathbb{P}(|M_X-\mathbb{E}M_X|\geq r)\leq 6\exp\left(-r/2\sqrt{\frac{c^2/2n^{2\alpha}}{1/(2(n-1))}}\right) =6\exp\left(-r/2\sqrt{\frac{c^2(n-1)}{n^{2\alpha}}}\right)
$$
\end{thm}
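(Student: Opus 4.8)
The plan is to apply the general method of Section 2 --- equivalently, to run the argument proving Theorem 2.2 verbatim, since (as remarked there) it uses nothing special about $SO(n)$ --- to the ASEP on $\{1,\dots,n\}$ with the midpoint function $f = M_X$. Two of the three ingredients are already assembled: the computation preceding the statement gives $\tripNorm{M}^2 \le 1/(2(n-1)) =: \delta$, and Theorem 6.1 together with the Taylor expansion recorded above gives the spectral gap $\lambda_n = c^2/2n^{2\alpha}$. The remaining preliminaries are to observe that, after fixing the conserved quantity $\sum_i X_i = n/2$, the chain is irreducible and reversible with respect to its (non-uniform) invariant measure $\pi$ --- this is exactly the reflecting-boundary ASEP analyzed in \cite{CapMar,Alcaraz,KomaNach} through its correspondence with the XXZ chain --- so that the Poincare inequality $\lambda_n \Var{g} \le \mathcal{Q}(g,g)$ is available; and that $M_X$, being bounded by $n/2$, is integrable, so that $\mathbb{E} M_X$ (the expectation under $\pi$) is well defined.

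Given these, I would normalize by setting $F = M/\sqrt{\delta}$ so that $\tripNorm{F} \le 1$, and then reproduce, step for step, the argument in the proof of Theorem 2.2: apply the Poincare inequality to $e^{\lambda F/2}$; symmetrize the Dirichlet form and use the pointwise estimate $(e^a-e^b)^2 \le (a-b)^2 e^{2\max(a,b)}$ to obtain $\mathcal{Q}(e^{\lambda F/2},e^{\lambda F/2}) \le \frac{\lambda^2}{4}\tripNorm{F}^2 \Lambda(\lambda)$ with $\Lambda(\lambda) = e^{-\lambda \mathbb{E} F}\mathbb{E} e^{\lambda F}$; combine the two into the recursion $\Lambda(\lambda) \le (1-\lambda^2/4\lambda_n)^{-1}\Lambda^2(\lambda/2)$; iterate it, using $\Lambda^{2^m}(\lambda/2^m) \to 1$; evaluate at $\lambda = \sqrt{\lambda_n}$ with the crude bound $\prod_{k\ge 0}(1-4^{-(k+1)})^{-2^k} < 3$; and apply Chebyshev's inequality. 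This yields $\mathbb{P}(F \ge \mathbb{E} F + s) \le 3e^{-s\sqrt{\lambda_n}/2}$, i.e., with $s = r/\sqrt{\delta}$, $\mathbb{P}(M_X \ge \mathbb{E} M_X + r) \le 3\exp(-(r/2) \sqrt{\lambda_n/\delta})$.

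To close, I would symmetrize: since $\tripNorm{-M} = \tripNorm{M}$, the same argument applied to $-M_X$ gives $\mathbb{P}(M_X \le \mathbb{E} M_X - r) \le 3\exp(-(r/2) \sqrt{\lambda_n/\delta})$, and a union bound turns the constant $3$ into $6$. Substituting $\lambda_n = c^2/2n^{2\alpha}$ and $\delta = 1/(2(n-1))$ gives $\sqrt{\lambda_n/\delta} = \sqrt{c^2(n-1)/n^{2\alpha}}$, which is precisely the claimed inequality.

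I do not expect a deep obstacle: the whole statement is an instance of the already-established Theorem 2.2, so the labor is confined to verifying the reversibility of the finite-segment ASEP and to the bookkeeping of the normalization and the two-sided union bound. The one point that merits a line of care is the replacement of the exact gap $1-\Delta^{-1}\cos(\pi/n)$ by the clean value $c^2/2n^{2\alpha}$: to be fully rigorous one should keep the lower bound $\lambda_n \ge c^2/2n^{2\alpha} - O(n^{-4\alpha}) - O(n^{-2})$ and absorb the error into the constant (or else state the conclusion for $n$ sufficiently large); since $0 < \alpha < 1$ this does not affect the leading rate.
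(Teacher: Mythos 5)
Your proposal is correct and follows essentially the same route as the paper: the paper's proof is precisely to plug the bound $\tripNorm{M}^2 \leq 1/(2(n-1))$ and the ASEP gap $\lambda_n \approx c^2/2n^{2\alpha}$ into the general Chatterjee--Ledoux scheme of Section 2 (the Theorem 2.2 argument), with the factor $6$ coming from the two-sided bound exactly as you describe. Your added remarks on reversibility of the finite-segment ASEP and on keeping track of the error in replacing $1-\Delta^{-1}\cos(\pi/n)$ by $c^2/2n^{2\alpha}$ are sensible refinements of details the paper leaves implicit, not a different argument.
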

\noindent
Notice that this implies that the height of the midpoint has fluctuations bounded above by a constant $n^{\alpha-1/2}$  for $0<\alpha<1$. 

Consider the length of the longest increasing (non-decreasing) subsequence of the random walk.  This is defined as $$ L_X=\max\{k \;   : \; i_1<i_2<\dots<i_k \; \mathrm{and} \; X_{i_1}\leq X_{i_2}\leq\dots\leq X_{i_k}\}$$  See \cite{PeresRW} for a more in depth description of this topic and results for the simple random walk.  

Notice that the height of the midpoint gives a lower bound on the length of the longest increasing subsequence.  Using ASEP as our Markov process and the spectral gap above, we can prove concentration of measure for $L_X$.  Notice that switching the position of two adjacent particles via ASEP can only change $L_X$ by at most $1$.  As before, let $X'$ be the evolution of $X$ after one step of the process.  Then, bounding the probability above by $1$, we have 
$$
\||L\||_{\infty}^2=\frac{1}{2}\max_X\mathbb{E}(L_X-L_{X'})^2
$$
$$
\leq \frac{1}{2} (1)^2=\frac{1}{2}
$$
so plugging into the Chatterjee Ledoux formula, we get the following result.
\begin{thm}
Letting $L_X$ denote the length of the longest increasing subsequence of the random walk after evolution under the asymmetric exclusion process, for all $r>0$ and $q=1-c/n^{\alpha}$, 
$$
\mathbb{P}(|L_X-\mathbb{E}L_X|\geq r)\leq 6\exp\left(-r/2\sqrt{\frac{c^2}{n^{2\alpha}}}\right)
$$
\end{thm}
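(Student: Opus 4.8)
The plan is to read this off the general scheme of Section~2, applied to the ASEP with the functional $f(X)=L_X$ and with $X=X_0$ drawn from the invariant measure $\pi$. The scheme needs three inputs, two of which are already on the table. First, a stationary \emph{reversible} chain for which the Poincar\'e inequality $\lambda_1\,\Var{f(X_0)}\le\mathcal Q(f,f)$ holds: notwithstanding its name, the exclusion process described here, run on the finite segment $\{1,\dots,n\}$ with a conserved number of particles, is reversible --- detailed balance holds with respect to the product ``blocking'' measure $\pi(X)\propto\prod_i\rho^{\,iX_i}$ with $\rho=(1-q/2)/(q/2)$ --- so the Poincar\'e step of the method is legitimate. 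Second, the spectral gap, namely the ASEP spectral gap stated at the start of this section, which the expansion $q\approx e^{-c/n^{\alpha}}$ specializes to $\lambda_1=c^2/(2n^{2\alpha})$. Third, a uniform bound $\||L\||_{\infty}^2\le\delta$, and the only point requiring an argument is that $\delta=\tfrac12$ is admissible.

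For that point, one step of ASEP leaves $X$ unchanged or replaces it by $X'=X^{(i,i+1)}$, the configuration obtained from $X$ by transposing the adjacent coordinates at sites $i$ and $i+1$. The key combinatorial fact --- asserted in the discussion above and to be justified here --- is that $|L_X-L_{X'}|\le 1$ for every such adjacent transposition. Fix a longest non-decreasing subsequence of $X$, of length $L_X$, and split into cases by how many of the sites $i,i+1$ it uses: if it uses neither, it is already a non-decreasing subsequence of $X'$; if it uses both, those sites occupy consecutive positions in the subsequence, and, because $X_i\le X_{i+1}$, deleting one of them leaves a non-decreasing subsequence of $X'$ of length $L_X-1$; if it uses exactly one of the two sites, replacing that site by the other one gives a non-decreasing subsequence of $X'$ of length $L_X$, since in $X'$ the transposed coordinate carries the same value while its neighbours in the subsequence have indices strictly before $i$ and strictly after $i+1$, so both the value ordering and the index ordering are preserved. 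Hence $L_{X'}\ge L_X-1$, and by symmetry $|L_X-L_{X'}|\le 1$. Bounding by $1$ the probability that a transposition actually occurs and summing over the $n-1$ equally likely choices of $i$ then yields $\||L\||_{\infty}^2=\tfrac12\max_X\mathbb E\big[(L_X-L_{X'})^2\mid X_0=X\big]\le\tfrac12$. Since $0\le L_X\le n$, integrability of $L$ is automatic.

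With $\delta=\tfrac12$ secured, I would run the recursion in $\Lambda(t)=e^{-t\mathbb E L_{X_0}}\,\mathbb E\big(e^{tL_{X_0}}\big)$ exactly as in the Ledoux-type argument used above to prove the Kac-walk concentration bound; equivalently, $\widetilde L:=\sqrt2\,L$ has triple norm at most $1$, so that argument (which used only reversibility, the Poincar\'e inequality, and the normalization $\||\cdot\||_{\infty}\le 1$) gives $\mathbb P(\widetilde L\ge\mathbb E\widetilde L+s)\le 3e^{-s\sqrt{\lambda_1}/2}$ for $s\ge 0$. Taking $s=\sqrt2\,r$ and substituting $\lambda_1=c^2/(2n^{2\alpha})$, so that $\sqrt2\,\sqrt{\lambda_1}=\sqrt{c^2/n^{2\alpha}}$, turns this into $\mathbb P(L_X\ge\mathbb E L_X+r)\le 3\exp\!\big(-\tfrac r2\sqrt{c^2/n^{2\alpha}}\big)$. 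Applying the same bound to $-L$, whose triple norm is unchanged, and taking a union bound produces the two-sided inequality with constant $6$, which is the assertion.

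I do not expect a genuine obstacle here: the analytic content is already packaged in the Section~2 scheme, and what is left is the combinatorial Lipschitz estimate and the routine bookkeeping of constants in the rescaling. The fussiest step is the ``uses exactly one site'' case of that estimate --- verifying that the replaced index still respects both the value ordering and the strict ordering of indices with its neighbours in the optimal subsequence --- which is nevertheless elementary. The one conceptual item worth spelling out, since the method of Section~2 presupposes a reversible chain, is that ASEP on a finite segment at fixed particle number is reversible, with the product blocking measure as its invariant law; this is classical but deserves an explicit sentence.
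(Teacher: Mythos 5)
Your proposal is correct and follows essentially the same route as the paper: the Lipschitz bound $|L_X-L_{X'}|\le 1$ under one ASEP step, the resulting bound $\||L\||_\infty^2\le\tfrac12$, the spectral gap $c^2/(2n^{2\alpha})$, and a plug-in to the Section~2/Theorem~2.2 scheme with the same constants $6$ and $r/2$. The only difference is that you spell out details the paper leaves implicit --- the case analysis for the adjacent-transposition Lipschitz estimate, the reversibility of the fixed-particle-number ASEP with respect to the blocking measure, and the $\sqrt2$ rescaling bookkeeping --- which strengthens rather than changes the argument.
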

\noindent
This implies that the fluctuations are bounded above by a constant times $n^{\alpha}$.  In particular, for $q=1-c/\sqrt{n}$, the fluctuations are bounded above by a constant times $\sqrt{n}$. 
 
In order to give some context to the size of the fluctuations, we calculate height of the midpoint, which gives a lower bound on the length of the longest increasing subsequence of the walk under this distribution.
\begin{thm}
\label{height}
For $q<1-c/n$ and $c=-20\log(3/5)$, the height of the midpoint of the random walk is $kn$ for some constant $k>0$.  
\end{thm}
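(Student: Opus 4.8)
The plan is to compute the midpoint height directly from the invariant measure $\pi$ of the ASEP, which I would identify first. Detailed balance for the given dynamics shows that moving one particle one step to the right multiplies the stationary weight by the fixed factor $t:=\frac{1-q/2}{q/2}=\frac{2-q}{q}>1$, so that $\pi(X)\propto t^{\sum_{i=1}^{n}iX_i}$ on configurations with $\sum_i X_i=n/2$. Writing the height function $h_m:=\sum_{i\le m}(2X_i-1)$ (so $h_0=h_n=0$), Abel summation gives $\sum_i iX_i=\frac{n(n+1)}{4}-\frac12\sum_{i=1}^{n-1}h_i$, hence $\pi$ is the uniform measure on pinned $\pm1$ paths reweighted by $\exp\!\big(-\frac{\log t}{2}\sum_{i=1}^{n-1}h_i\big)$. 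Since $q<1-c/n$ forces $\log t\ge\log\frac{1+c/n}{1-c/n}\ge\frac{2c}{n}$, this reweighting strongly favours paths that dip below the axis, and what remains is to quantify the dip at the midpoint, i.e. $M_X=h_{n/2}=2A-\frac n2$ where $A:=\#\{i\le n/2:X_i=1\}$.

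For the computation I would pass to the grand-canonical picture. With fugacity $z=t^{-(n+1)/2}$, the product measure with $\mathbb{P}(X_i=1)=\frac{t^iz}{1+t^iz}=\big(1+t^{(n+1)/2-i}\big)^{-1}$ has $\mathbb{E}\big[\sum_iX_i\big]=n/2$ exactly (by the involution $i\leftrightarrow n+1-i$), and conditioned on $\sum_iX_i=n/2$ it is precisely $\pi$. Under the product measure $\mathbb{E}_{\mathrm{gc}}[A]=\sum_{i\le n/2}\big(1+t^{(n+1)/2-i}\big)^{-1}$; as each summand is decreasing in $t$ and $\log t=2c/n+O(n^{-3})$ at the boundary $q=1-c/n$, this Riemann sum obeys $\mathbb{E}_{\mathrm{gc}}[A]\le n\!\int_0^{1/2}\!\big(1+e^{2c(1/2-x)}\big)^{-1}dx+O(1)=\frac{n}{2c}\log\frac{2}{1+e^{-c}}+O(1)$, with equality approached as $q\to(1-c/n)^-$. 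Equivalence of ensembles then transfers this to $\pi$ with an $o(n)$ error: since $\mathbb{E}_\pi[A]=\mathbb{E}_{\mathrm{gc}}[A]+\mathbb{E}_{\mathrm{gc}}\big[(A-\mathbb{E}_{\mathrm{gc}}A)\mathbf{1}_{\sum X_i=n/2}\big]/\mathbb{P}_{\mathrm{gc}}(\sum X_i=n/2)$, a local central limit theorem gives $\mathbb{P}_{\mathrm{gc}}(\sum X_i=n/2)\gtrsim n^{-1/2}$ while the numerator is $O(n^{1/2})$ by Cauchy--Schwarz, so $\mathbb{E}_\pi[A]=\mathbb{E}_{\mathrm{gc}}[A]+O(n^{3/4})$. (The same asymptotics, with limiting density $\rho(x)=\big(1+e^{c(1-2x)}\big)^{-1}$, also drop out of the exact identity $\mathbb{P}_\pi(A=j)\propto t^{(n/2-j)^2}\qbinom{n/2}{j}_t^{2}$, obtained from the $q$-Vandermonde identity, via a saddle-point analysis in $j$ --- probably the most natural route given that the $q$-binomials are already at hand.)

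Combining the pieces, $\mathbb{E}_\pi[M_X]=2\mathbb{E}_\pi[A]-\frac n2\le -kn+o(n)$ with $k=\frac12-\frac1c\log\frac{2}{1+e^{-c}}\ge\frac12-\frac{\log 2}{c}>0$, and for $c=-20\log(3/5)$ a direct numerical check gives $k>\tfrac25$; together with the $O(\sqrt n)$ fluctuation bound for $M_X$ established just above, $M_X$ equals $-kn$ up to an error $o(n)$. Thus the midpoint lies $kn$ below the pinned endpoints --- which is exactly what furnishes the lower bound $L_X\ge \frac n2-M_X=\frac n2+kn$ on the longest increasing subsequence; the ``$k>0$'' in the statement should be read with this sign, the walk dipping \emph{downward} because the drift carries the up-steps to the right. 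I expect the only genuine work to lie in the equivalence-of-ensembles step --- a local limit theorem for $\sum_iX_i$, a sum of independent but non-identically distributed Bernoulli variables, uniform over the relevant range of $t$ --- while detailed balance, the Abel and $q$-Vandermonde identities, and the elementary estimates on $k$ are routine, and the complementary range $q\le 1-\omega(1/n)$ (where $n\log t\to\infty$ and the dip only moves closer to $\frac n2$) is strictly easier.
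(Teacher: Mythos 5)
Your proposal is correct in substance but reaches the result by a genuinely different route than the paper. Both arguments rest on the product (blocking-measure) structure, but you handle the canonical constraint $\sum_i X_i=n/2$ cleanly: you derive the reversible measure $\pi(X)\propto t^{\sum_i iX_i}$ with $t=(2-q)/q$ from detailed balance for the stated flip probabilities, pick the symmetric fugacity $z=t^{-(n+1)/2}$ so the grand-canonical mean is exactly $n/2$, compute $\mathbb{E}_{\mathrm{gc}}[A]$ as a Riemann sum (your integral evaluation and the bound $k\geq \tfrac12-\tfrac{\log 2}{c}>0$ check out), and transfer to $\pi$ by equivalence of ensembles via a local CLT plus Cauchy--Schwarz, giving an $O(n^{3/4})$ correction. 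The paper instead never conditions rigorously: it works with a product measure whose interface is deliberately placed at $9n/20$ (left of the midpoint), bounds the expected height at $8n/20$, absorbs the $O(\sqrt n)$ particle-number fluctuations by assuming all steps between $8n/20$ and $n/2$ go the unfavorable way, and then argues that moving the interface to $n/2$ only helps; this avoids any limit theorem but is rougher and is where its specific constant $c=-20\log(3/5)$ comes from (also note the paper's blocking ratio is written as $aq^k$, matching the standard ASEP parametrization in the spectral-gap theorem rather than the ratio $(2-q)/q$ you derive from the stated rates -- this only shifts constants, as in your factor $\log t\approx 2c/n$ versus $c/n$). What your route buys is an explicit limiting density and constant $k$ and a statement that combines mean and concentration; what it costs is the one ingredient you defer, the local limit theorem for the non-identically distributed Bernoulli sum uniformly over the relevant range of $t$ (and a few lines for the ``easier'' range $q\leq 1-\omega(1/n)$, which you assert rather than prove). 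That lemma is standard since the variance of $\sum_i X_i$ is of order $n$ for fixed $c$, so I regard your plan as sound, and indeed somewhat tighter than the paper's own argument; just be sure to keep the factor $\sqrt{\mathbb{P}_{\mathrm{gc}}(\sum X_i=n/2)}$ from Cauchy--Schwarz when you state the numerator bound, since that is what yields $O(n^{3/4})$ rather than $O(n)$ after dividing.
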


Before we give the proof, we will need the following lemma.
\begin{lem}
Consider a random walk with independent steps.  Assume that $\mathbb{P}(X_k=0)=\frac{1}{aq^k+1}$ and $\mathbb{P}(X_k=1)=\frac{aq^k}{aq^k+1}$ for some $a>0$, $q\in (0,1)$ and $k\in\mathbb{Z}_{+}$.  Consider $N_X=\sum_{i=1}^n X_i$.  This gives us the number of up steps in our random walk, or equivalently, the number of particles in our particle process.  The fluctuations of $N_X$ are at most order $\sqrt{n}$.
\end{lem}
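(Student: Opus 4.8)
The plan is to compute the variance of $N_X = \sum_{i=1}^n X_i$ directly, exploiting independence of the steps. Since the $X_i$ are independent, $\mathrm{Var}(N_X) = \sum_{i=1}^n \mathrm{Var}(X_i)$, and each $X_i$ is a Bernoulli random variable with success probability $p_i = \frac{aq^i}{aq^i + 1}$, so $\mathrm{Var}(X_i) = p_i(1-p_i) = \frac{aq^i}{(aq^i+1)^2}$. The key observation is that $\frac{aq^i}{(aq^i+1)^2} \leq \frac{1}{4}$ for every $i$ (by AM--GM, $(aq^i+1)^2 \geq 4aq^i$), but this crude bound only gives $\mathrm{Var}(N_X) \leq n/4$, which is the right order. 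To get this cleanly I would just write $\mathrm{Var}(N_X) = \sum_{i=1}^n \frac{aq^i}{(aq^i+1)^2} \leq \sum_{i=1}^n \frac{1}{4} = \frac{n}{4}$, hence by Chebyshev's inequality the fluctuations of $N_X$ around its mean are $O(\sqrt{n})$: for any $r > 0$, $\mathbb{P}(|N_X - \mathbb{E}N_X| \geq r\sqrt{n}) \leq \frac{1}{4r^2}$.

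First I would record the mean and variance formulas for a single step, then invoke independence to sum them. Second I would apply the elementary inequality $\frac{t}{(t+1)^2} \leq \frac14$ (valid for all $t \geq 0$) with $t = aq^i$ to bound each term. Third, summing over $i$ from $1$ to $n$ gives $\mathrm{Var}(N_X) \leq n/4$. Finally, Chebyshev converts the variance bound into the stated fluctuation bound of order $\sqrt{n}$; alternatively, since $N_X$ is a sum of independent bounded random variables, one could instead invoke Hoeffding's inequality to get a sub-Gaussian tail $\mathbb{P}(|N_X - \mathbb{E}N_X| \geq t) \leq 2e^{-2t^2/n}$, which again shows the fluctuations are at most order $\sqrt{n}$ — either route suffices for the statement as written.

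There is essentially no main obstacle here: the lemma is a routine second-moment computation, and the only mild subtlety is that the naive per-term bound of $\tfrac14$ does not exploit the geometric decay of $q^i$ and so cannot beat the $\sqrt{n}$ scaling — but since the lemma only claims fluctuations of order $\sqrt n$, this is exactly what is needed. If one wanted a sharper constant one could split the sum according to whether $aq^i \geq 1$ or $aq^i < 1$ and bound the tail geometrically, but that refinement is unnecessary for the stated result. I would therefore keep the argument to the four short steps above.
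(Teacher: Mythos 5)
Your proposal is correct and follows essentially the same route as the paper: use independence to write $\mathrm{Var}(N_X)=\sum_i \mathrm{Var}(X_i)$, bound each Bernoulli variance by a constant (the paper uses the cruder bound $1$, you use $1/4$), and finish with Chebyshev's inequality. The Hoeffding alternative you mention is a minor strengthening but not needed for the stated order-$\sqrt{n}$ conclusion.
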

\begin{proof}
We begin by calculating the variance of $N_X$.  We can then use Chebyshev's inequality to bound the fluctuations.  Since the $X_i$ are independent, 
$$
\mathrm{Var}(N_X)=\sum_{i=1}^n \mathrm{Var}(X_i)
$$
Using the probabilities given in the lemma, we know that 
$$
\mathrm{Var}(X_i)=\frac{aq^i}{aq^i+1}-\left (\frac{aq^i}{aq^i+1}\right)^2
$$
$$
=\frac{aq^i}{aq^i+1}\left (1-\frac{aq^i}{aq^i+1}\right)
$$
This gives 
$$
\mathrm{Var}(N_X)=\sum_{i=1}^n\frac{aq^i}{aq^i+1}\left (1-\frac{aq^i}{aq^i+1}\right)
$$
A derivative calculation show that $\frac{aq^i}{aq^i+1}\left (1-\frac{aq^i}{aq^i+1}\right)$ is decreasing in $i$, so 
$$
\mathrm{Var}(N_X)\leq n \left(\frac{aq}{aq+1}\right)\left(1-\frac{aq}{aq+1}\right)
$$
Since we only care about the order of the fluctuations, we can bound the positive value
$$
\left(\frac{aq}{aq+1}\right)\left(1-\frac{aq}{aq+1}\right)
$$
by $1$, giving us 
$$
\mathrm{Var}(N_X)\leq n
$$
Plugging into Chebyshev's inequality tells us that 
$$\mathbb{P}\left(|N_X-\mathbb{E}(N_X)|\geq k\right)\leq \frac{n}{k^2}$$
which proves our result.  

\end{proof}
We are now set to prove theorem \ref{height}
\begin{proof}
The basic idea of the proof of theorem \ref{height} is as follows.  We will begin by assuming that the steps of our random walk are independent, so that our measure is a product measure.  Recall, the steps are not independent, since we are conditioning on the fact that we have exactly $n/2$ steps up and $n/2$ steps down.  However, if $n$ is large, the steps are {\em close} to independent.  By bounding the fluctuations of the number of particles in our product system, we can then relate our non-independent state to the product state. 

Begin by assuming that 
$$
\frac{P(X_k=0)}{P(X_k=1)}=aq^k
$$
so that we have a product measure. 
Then we know that 
$$
P(X_k=0)=\frac{1}{aq^k+1}
$$ and $$P(X_k=1)=\frac{aq^k}{aq^k+1}$$
Then 
$$
\mathbb{E}\left(\sum_{i=1}^k X_i\right)=\sum_{i=1}^k\frac{aq^i}{aq^i+1}$$
Since the summand is decreasing in 
$i$, we get the bounds 
$$
k\left(\frac{aq^k}{aq^k+1}\right)\leq \mathbb{E}\left(\sum_{i=1}^k  X_i\right)\leq k\left(\frac{aq}{aq+1}\right)
$$
We will work in this generality for now, and add in appropriate values of $a$ and $k$ later.  Using this information, we can get bounds on the height of the random walk at point $k$.  Let $H_k$ be the height of the random walk at position $k$. For convenience later, we will assume that $X_i=1$ corresponds to a step down in the walk, and that $X_i=0$ corresponds to a step up.  Provided that we can prove that our height is $cn$ for $c<0$, our theorem will be proved.  We have  
$$
\mathbb{E}(H_k)=(-1)\sum_{i=1}^k X_i+\left(k-\sum_{i=1}^k X_i\right)=k-2\left(\sum_{i=1}^k X_i\right)
$$
Plugging in our bounds on $\mathbb{E}\left(\sum_{i=1}^k X_i\right)$, we get 
$$
-k\left(2\left(\frac{aq}{aq+1}\right)-1\right)\leq\mathbb{E}(H_k)\leq -k\left(2\left(\frac{aq^k}{aq^k+1}
\right)-1\right)
$$

At this point, we need a bound on the number of particles in the system.  
Since we are assuming the $X_i$ are independent, we can use the result from the previous lemma, which gives us
$$
\mathbb{P}\left(\left |\sum_{i=1}^n X_i-M\right |>u\right)\leq 4\exp(-u^2/4M)
$$ 
where $M$ is a median for the number of particles.  Estimating the median by the expectation of the number of particles, we see that $M$ should at least be close to $n/2\left(\frac{aq}{aq+1}\right)$.  If we choose $a$ appropriately corresponding to $q$, we should be able to make the constant order $1$, making our expectation order $n$.  Then, by the concentration of measure inequality, $\sum_{i=1}^n X_i$  has  fluctuations on the order of $\sqrt{n}$.  This is reasonably small compared with the expected number of particles in the system. 

Recall that we are actually concerned with finding the height of the midpoint, so plugging in $k=n/2$, we have that 
$$
-n/2\left(2\left(\frac{aq}{aq+1}\right)-1\right)\leq\mathbb{E}(H_{n/2})\leq -n/2\left(2\left(\frac{aq^{n/2}}{aq^{n/2}+1}
\right)-1\right)
$$
At this point, we can ignore the lower bound, using the fact that that a lower bound is $-n/2$ anyway, regardless of the configuration.  We will refer to our interface as the position in which $\mathbb{P}(X=0)=\mathbb{P}(X=1)$.  For now, we will put our interface at $9n/20$, which will be just to the left of the midpoint.   In other words, $a=q^{-9n/20}$ and at position $9n/20$, $\mathbb{P}(X=0)=\mathbb{P}(X=1)$. We will push it to the edge at $n/2$ at the end, since moving the interface to the right only increases the probability of more $X_i$ being equal to $1$, hence lowering the expectation of the midpoint.  Using this interface, we will first look at the height of the random walk at position $8n/20$.  Using the upper bound from above, we have that 
$$
\mathbb{E}(H_{8n/20})\leq \frac{-8n}{20}\left(2\left(\frac{q^{-n/20}}{q^{-n/20}+1}\right)-1\right)
$$
Beyond this point, if we assume that all of the remaining steps between $8n/20$ and $n/2$ are steps up, we have that 
$$
\mathbb{E}(H_{n/2})\leq \frac{-8n}{20}\left(2\left(\frac{q^{-n/20}}{q^{-n/20}+1}\right)-1\right) + \frac{2n}{20}
$$
The important thing to notice here, is this actually gives us an upper bound on the height of the midpoint in the fixed particle number (ASEP) random walk.  In the product state configuration, with our interface at $\frac{9n}{20}$, we know that the fluctuations in the number of down steps are less than $\frac{n}{20}$.  By assuming that all steps after site $\frac{8n}{20}$ are up, we have accounted for the worst case scenario where we actually have $\sqrt{n}$ less down steps then we expect.  If some of the steps after site $\frac{8n}{20}$ are actually down instead of up, this will only serve to lower the height of our midpoint.  Hence, we have, that in the ASEP (fixed number of down steps) random walk generated using the blocking measures, 
$$
\mathbb{E}(H_{n/2})\leq \mathbb{E}(H_{n/2})\leq \frac{-8n}{20}\left(2\left(\frac{q^{-n/20}}{q^{-n/20}+1}\right)-1\right) + \frac{2n}{20}
$$
We would like to show that for an appropriate choice of $q$, this is $cn$ for some constant $c<0$.  This is true provided that 
$$
\frac{8}{20}\left(2\left(\frac{q^{-n/20}}{q^{-n/20}+1}\right)-1\right)>\frac{2}{20}
$$
Solving this inequality gives a condition on q, which is 
$$
q>\left(\frac{3}{5}\right)^{\frac{20}{n}}
$$
or 
$$
q>e^{20/n\log(3/5)}
$$
Taylor expanding the exponential gives 
$$
q> 1+\frac{20}{n}\log(3/5)+\frac{400}{2n^2}(\log(3/5))^2+\dots
$$
As $n\rightarrow\infty$, taking $q>1-\alpha/n$ with $\alpha=-20\log(3/5)$ should be sufficient.  
As long as this condition is satisfied, our expectation is $cn$ for a constant $c<0$.  

At this point, we do want to move the interface to $a=q^{-n/2}$, such that $\mathbb{P}(X_{n/2}=0)=\mathbb{P}(X_{n/2}=1)$.  This simply increases our probability of down steps between $\frac{9n}{20}$ and $\frac{n}{2}$.  Since adding extra down steps only decreases the expectation of the height of the midpoint, the theorem is proved. 
\end{proof}
  
\section{Remarks}
Using this method, we are able to show concentration of measure of the empirical spectral distribution not only for operator compressions via $SO(n)$ but also for operators that are "compressed" by conjugation with a Gaussian matrix.  We are also able to use the method to prove a concentration of measure result for the length of the longest increasing subsequence of a random walk.  It is likely that this method could be applied to a much wider range of Markov chains, given that the chain does not change too many entries at once, has an appropriate invariant distribution, and for which the spectral gap is known.  It is possible that better bounds for the Gaussian compression could be obtained by adapting the method to use the "second" spectral gap or the exponential decay rate in relative entropy found in \cite{BonLossVaid}. 

It is worth noting that Talagrand's isoperimetric inequality \cite{Talagrand} gives concentration of measure for the length of the longest increasing subsequence for random permutations, but it cannot be used in the context of this ASEP random walk, as it requires independence.  Using Chatterjee and Ledoux's method, independence is not needed.  We only need a spectral gap bound for the Markov chain.    
\\ \\
{\bf Acknowledgements: } We would like to thank Shannon Starr for suggesting this problem to us and for many helpful discussions and comments.   

\bibliographystyle{plain}

\end{document}